\newcommand{\DeclareMathOperator}[2]{\newcommand{#1}{\mathop{\mathrm{#2}}
                                     \nolimits}}
\newif\iflabel
\newcommand{\Label}[1]{\iflabel\ifmmode\makebox[0pt][l]{{\color{red}[#1]}}
                       \else\marginpar{{\color{red}[#1]}}\fi\fi\label{#1}}
\DeclareMathOperator{\diag}{diag}
\DeclareMathOperator{\rk}{rk}
\DeclareMathOperator{\Syl}{Syl}
\newcommand{\Z}{{\mathbb Z}}
\newcommand{\N}{{\mathbb N}}
\newcommand{\F}{{\mathbb F}}
\newcommand{\ti}{\tilde}
\newcommand{\la}{\langle}
\newcommand{\ra}{\rangle}
\newcommand{\im}{{\rm im}}
\newcommand{\GAP}{{\sf GAP}}
\newtheorem{theorem}{Theorem}
\newtheorem{lemma}[theorem]{Lemma}
\newenvironment{proof}{\par\vskip-\lastskip\vskip\topsep
                       \noindent{\it Proof.}\vadjust{\nobreak}\quad
                       \begingroup\divide\topsep3\divide\itemsep3
                       \divide\partopsep3\divide\parskip3
                       \divide\parsep3}
                      {\ifvmode\penalty10000\hbox to\hsize{\hfil$\Box$}
%                              \vrule height1exdepth0ptwidth1ex}
                       \else\parfillskip0pt\widowpenalty10000\hfil$\Box$
%                              \vrule height1exdepth0ptwidth1ex
%                      \fi\par\vskip\topsep\endgroup}
                       \fi\par\vskip 1.5ex\endgroup}
\author{Ren\'e Hartung}
\title{Solving linear equations over finitely generated abelian groups}
\date{\today}
\begin{document}
\maketitle
\begin{abstract}
  We discuss various methods and their effectiveness for solving linear
  equations over finitely generated abelian groups. More precisely, if
  $\varphi\colon G\to H$ is a homomorphism of finitely generated abelian
  groups and $b\in H$, we discuss various algorithms for checking whether
  or not $b\in \im\varphi$ holds and if so, for computing a pre-image
  of $b$ in $G$ together with the kernel of $\varphi$.
\end{abstract}

%%%%%%%%%%%%%%%%%%%%%%%%%%%%%%%%%%%%%%%%%%%%%%%%%%%%%%%%%%%%%%%%%%%%%%%%%%%%
\section{Introduction}
Solving linear equations over finitely generated abelian groups is an
important tool for computing $H^2(G,A)$ of a given $G$-module $A$.
Therefore algorithms for solving linear equations are important in
extension theory of groups,~see Section~8.7 of~\cite{HEO05}. For
extending the Small Groups Library, see e.g.~\cite{BEO02}, we need
effective algorithms especially for this problem. In this note, we
discuss various methods and we propose an effective algorithm which
performs well on a broad range of randomly chosen linear equations.

%%%%%%%%%%%%%%%%%%%%%%%%%%%%%%%%%%%%%%%%%%%%%%%%%%%%%%%%%%%%%%%%%%%%%%%%%%%%
\section{Preliminaries}
Let $G$ and $H$ be finitely generated abelian groups and let
$\varphi\colon G\to H$ be a homomorphism. Further let $b\in H$ be given.
A \emph{linear equation over $G$} asks whether or not $b\in\im\varphi$
holds and, if so, for computing a pre-image of $b$ in $G$ together with
the kernel of $\varphi$.

\section{Standard methods for polycyclic groups}
Since the groups $G$ and $H$ are finitely generated and abelian, both
are polycyclic.  Therefore the methods for homomorphisms of polycyclic
groups described in~\cite[Section~3.5.2]{Eick01} apply. We briefly
summarize these methods here for completeness.\medskip

The homomorphism $\varphi\colon G\to H$ naturally corresponds to
the subgroup $U_\varphi = \{ (g^\varphi,g) \mid g\in G \}$ of the
direct product $H\times G$. Given polycyclic sequences ${\mathcal
G}$ and ${\mathcal H}$ of $G$ and $H$ respectively, the algorithms
in~\cite{Eick01} allow to compute an induced polycyclic sequence for
$U_\varphi$ with respect to ${\mathcal H} \times{\mathcal G}$. This
sequence allows to read off the kernel of $\varphi$ and can be used to
decide whether or not $b\in\im\varphi$ holds. 
\begin{lemma}[Eick, 2001]
  Let $\varphi$ be a group homomorphism and ${\mathcal U}$ be an induced
  polycyclic sequence for $U_\varphi$ with respect to ${\mathcal H}\times
  {\mathcal G}$. Then ${\mathcal U}$ has the form $((\bar u_1,u_1),\ldots,(\bar
  u_s,u_s))$ with $u_i\in G$ and $\bar u_i\in H$. Let $t$ be maximal 
  with $\bar u_t\neq 1$. 
  \begin{enumerate}\addtolength{\itemsep}{-0.75ex}
  \item Then $(\bar u_1,\ldots,\bar u_t)$ is an induced polycyclic sequence
        for the image of $\varphi$ with respect to ${\mathcal H}$.
  \item Then $(u_{t+1},\ldots,u_s)$ is an induced polycyclic sequence
        for the kernel of $\varphi$ with respect to ${\mathcal G}$.
  \end{enumerate}
\end{lemma}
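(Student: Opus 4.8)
The plan is to reduce both statements to a single structural remark. Set $\mathcal H=(h_1,\dots,h_m)$ and $\mathcal G=(g_1,\dots,g_n)$ and write $w_i:=(\bar u_i,u_i)$. The projection $\pi\colon H\times G\to H$, $(h,g)\mapsto h$, is precisely the quotient homomorphism of $H\times G$ by the normal subgroup $N:=\{1\}\times G$; moreover $N=\la(1,g_1),\dots,(1,g_n)\ra$ is the subgroup generated by the terminal segment of the polycyclic sequence $\mathcal H\times\mathcal G$ of $H\times G$, and $(H\times G)/N\cong H$ carries the polycyclic sequence $\mathcal H$. Both parts then come down to the standard behaviour of an induced polycyclic sequence under intersecting its subgroup with such a terminal-segment subgroup and under passing to the corresponding quotient, compatibly with depths and relative orders. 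I will make this explicit rather than merely cite it.

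First I would record the block structure of $\mathcal U$. For $(h,g)\in H\times G$ the $\mathcal H\times\mathcal G$-normal form is the concatenation of the $\mathcal H$-normal form of $h$ with the $\mathcal G$-normal form of $g$, so the $\mathcal H\times\mathcal G$-depth of $(h,g)$ is at most $m$ precisely when $h\neq 1$, and otherwise equals $m$ plus the $\mathcal G$-depth of $g$. Since the members of an induced polycyclic sequence have strictly increasing depths, the indices $i$ with $\bar u_i\neq 1$ form an initial segment $\{1,\dots,t\}$ --- with $t$ exactly as in the statement --- and $\bar u_{t+1}=\dots=\bar u_s=1$, so $w_i=(1,u_i)$ for $i>t$. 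For part (B), one inclusion is then immediate: each $w_i=(1,u_i)$ with $i>t$ lies in $U_\varphi$, hence $u_i^\varphi=1$, i.e. $u_i\in\ker\varphi$. Conversely let $k\in\ker\varphi$, so $(1,k)\in U_\varphi$; writing $(1,k)=w_1^{e_1}\cdots w_s^{e_s}$ in the normal form afforded by $\mathcal U$ (with $0\le e_i<r_i$, where $r_i$ is the relative order of $w_i$) and applying $\pi$ yields $1=\bar u_1^{e_1}\cdots\bar u_t^{e_t}$. The key step is now the normal-form property of an induced polycyclic sequence: the $\mathcal H\times\mathcal G$-depth of $w_1^{e_1}\cdots w_s^{e_s}$ equals that of $w_j$, where $j$ is least with $e_j\neq 0$; if such a $j$ were $\le t$ this depth would be $\le m$, so $(1,k)$ would have nontrivial $H$-component, which is absurd. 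Hence $e_1=\dots=e_t=0$, giving $(1,k)\in\la w_{t+1},\dots,w_s\ra$ and $k\in\la u_{t+1},\dots,u_s\ra$. Thus $(u_{t+1},\dots,u_s)$ generates $\ker\varphi$, and it is an \emph{induced} sequence for $\ker\varphi$ with respect to $\mathcal G$ because $(w_{t+1},\dots,w_s)$ is the induced polycyclic sequence of $U_\varphi\cap N$ obtained by restricting $\mathcal U$ to the terminal segment $(1,g_1),\dots,(1,g_n)$ of $\mathcal H\times\mathcal G$, and the isomorphism $N\to G$, $(1,g)\mapsto g$, transports depths, relative orders and the defining intersection conditions verbatim.

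For part (A), the restriction $\pi|_{U_\varphi}\colon U_\varphi\to\im\varphi$ is surjective and, by part (B), has kernel $U_\varphi\cap N=\la w_{t+1},\dots,w_s\ra$; being the intersection of $U_\varphi$ with the normal subgroup $N$ of $H\times G$, this is normal in $U_\varphi$, so $\im\varphi\cong U_\varphi/\la w_{t+1},\dots,w_s\ra$. Passing the polycyclic sequence $\mathcal U$ to this quotient makes the images $\pi(w_1),\dots,\pi(w_t)=\bar u_1,\dots,\bar u_t$ of the surviving generators a polycyclic sequence of $\im\varphi$ with unchanged relative orders $r_1,\dots,r_t$; it is induced with respect to $\mathcal H$ because for $i\le t$ the $\mathcal H$-depth of $\bar u_i$ coincides with the $\mathcal H\times\mathcal G$-depth of $w_i$ (so these depths are again strictly increasing), and the remaining defining conditions for an induced sequence follow from those for $\mathcal U$ by applying $\pi$. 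I expect the single delicate point to be the normal-form/depth step in part (B): it is the only place where more than the bare generation statements is used, and it is exactly what forbids any interaction between the $H$- and $G$-blocks of $\mathcal U$. The remaining steps are routine manipulations of the definitions and could alternatively be quoted from the discussion of induced polycyclic sequences in~\cite{Eick01} and~\cite{HEO05}.
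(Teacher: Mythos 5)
Your proof is correct. There is nothing in the paper to compare it against line by line: the paper offers no argument of its own for this lemma and simply refers the reader to~\cite{Eick01}. Your write-up is, in effect, the standard proof behind that citation --- the block structure of depths in ${\mathcal H}\times{\mathcal G}$ forcing $\bar u_i\neq 1$ exactly on an initial segment, the leading-depth property of induced polycyclic normal forms to identify $U_\varphi\cap(\{1\}\times G)$ with $\la w_{t+1},\ldots,w_s\ra$ (which is indeed the one non-routine step), and the quotient of $U_\varphi$ by this intersection, with relative orders preserved because $\la w_{t+1},\ldots,w_s\ra$ is contained in every tail subgroup $\la w_{i+1},\ldots,w_s\ra$ for $i\leq t$. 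One small point to polish: since $G$ and $H$ are finitely generated rather than finite, some relative orders $r_i$ may be infinite, so in the normal form $(1,k)=w_1^{e_1}\cdots w_s^{e_s}$ the exponents at such positions range over $\Z$ rather than $\{0,\ldots,r_i-1\}$; this does not affect your depth argument, but the phrasing should allow for it.
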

\begin{proof}
  For a proof, we refer to~\cite[p.36]{Eick01}.
\end{proof}
This method is implemented in the {\scshape Polycyclic} package of
\GAP, see~\cite{Polycyclic}. However, for finitely generated abelian
groups, we can do much better as we will show in the following.

%%%%%%%%%%%%%%%%%%%%%%%%%%%%%%%%%%%%%%%%%%%%%%%%%%%%%%%%%%%%%%%%%%%%%%%%%%%%
\section{Solution by solving linear Diophantine equations}\label{sec:DioSNF}
In this section we describe a more effective algorithm relying on linear
algebra only. Let $G$ be a finitely generated abelian group. Then $G$
decomposes into its torsion subgroup $T(G)$ and a free abelian subgroup
so that $G \cong \Z^\ell \times T(G)$ holds. Further, the torsion subgroup
$T(G)$ decomposes into its $p$-Sylow subgroups so that we may identify
\[
  G = \Z^{\ell} \oplus \Syl_{p_1}(T(G)) \oplus\cdots\oplus \Syl_{p_k}(T(G)),
\]
where $\Syl_{p_i}(T(G))$ denotes the $p_i$-Sylow subgroup of
$T(G)$. Similarly, we can identity
\[
  H = \Z^{\ell'} \oplus \Syl_{p_1}(T(H)) \oplus\cdots\oplus \Syl_{p_k}(T(H)).
\]
Every homomorphism $\varphi\colon G\to H$ induces a homomorphism
$\varphi_i = \varphi|_{\Syl_{p_i}(T(G))}$ of the $p_i$-Sylow
subgroup. Assume that $\Syl_{p_i}(T(H))\cong \Z_{p_i^{\alpha_{i1}}}
\oplus\cdots\oplus \Z_{p_i^{\alpha_{in_i}}}$ holds. Then, with respect
to an independent generating set of $G$, the equation $x^\varphi = b$
translates to equations of the form
\begin{equation}\label{eqn:ModSys}
% \begin{array}{ccccccccccccc}
%   a_{11} x_1 &+& \cdots &+& a_{1s} x_s &+& a_{1,s+1}x_{s+1} &+& \cdots &+& 
%   a_{1t} x_t &\equiv& b_1 \mod p_1^{\alpha_{11}} \\
%   \vdots && \ddots && \vdots && \vdots && \ddots && \vdots && \vdots \\
%   a_{m1} x_1 &+& \cdots &+& a_{ms} x_s &+& a_{m,s+1}x_{s+1} &+& \cdots &+& 
%   a_{mt} x_t &\equiv& b_m \mod p_m^{\alpha_{mn_m}} \\
% \end{array}
  \left(\begin{array}{cc}
    A_t & B \\ 
    0   & A_s
  \end{array}\right) \, \xi = \beta \pmod{[p_1^{\alpha_{11}},\ldots,p_m^{\alpha_{mn_m}},0,\ldots,0]}
% \left(\begin{array}{c}
% x_{1}\\ \vdots\\x_{\ell}
% \end{array}\right) = 
% \left(\begin{array}{c}
% b_1 \\ \vdots \\ b_\ell
% \end{array}\right) \pmod{\textrm{\color{red}hallo welt!}} 
\end{equation}
where $a_{i1} x_1 + \cdots + a_{in} x_n = b_i \pmod{0}$ denotes the
Diophantine equation, the sub-matrix $A_t$ is a block diagonal matrix describing
the homomorphisms $\varphi_1,\ldots,\varphi_k$, while $A_s$ incorporates
the action of $\varphi$ on the free abelian subgroup $\Z^\ell \cong
G/T(G)$.\medskip

For solving~(\ref{eqn:ModSys}), we introduce free variables, one for
each generator of the torsion subgroup $T(G)$. Thereby we obtain a linear
system of Diophantine equations of the form
\begin{equation}\label{eqn:DioSys}
 \left(\begin{array}{ccc}
  A_t & B & D \\
   0  & A_s & 0 
 \end{array}\right)\,\hat\xi = \beta
%\left(\begin{array}{c}
% x_{1}\\ \vdots\\x_{\ell}\\ t_1\\ \vdots\\ t_n
% \end{array}\right) = 
% \left(\begin{array}{c}
% b_1 \\ \vdots \\ b_\ell
% \end{array}\right) 
\end{equation}
where $D = \diag(p_1^{\alpha_{11}},\ldots,p^{\alpha_{mn_m}})$ is a
diagonal matrix describing the modular equations of the form $a_{i1}x_1 +
\cdots + a_{i\ell} x_\ell \equiv b_i \pmod{p^{\alpha_{ij}}}$.

%%%%%%%%%%%%%%%%%%%%%%%%%%%%%%%%%%%%%%%%%%%%%%%%%%%%%%%%%%%%%%%%%%%%%%%%%%%%
\subsection{Solving linear Diophantine equations}
The linear system of Diophantine equations in~(\ref{eqn:DioSys})
can be solved using the Smith normal form; see~\cite{Laz96}. Recall
that, for every $A\in\Z^{m\times n}$ there are unimodular matrices
$L\in{\rm SL}(m,\Z)$ and $R\in{\rm SL}(n,\Z)$ such that $LAR$ is a
diagonal matrix $D=\diag(d_1,\ldots,d_r,0,\ldots,0)$ whose non-zero
entries $d_1,\ldots,d_r$ satisfy $d_{i}\mid d_{i+1}$ for every $ i <
r=\rk(A)$.\medskip

Therefore the linear system $Ax = b$ of linear Diophantine
equations in~(\ref{eqn:DioSys}) is equivalent to the equations
$Dy=Lb$ and $x=Ry$. Write $c=Lb$. Then the system $Dy=c$ has
an integral solution if and only if $d_i \mid c_i$ holds for
every $1\leq i\leq r$ and $c_i=0$ otherwise. If the equation
$Dy = c$ has an integral solution, then these are given by $y =
\left({c_1}/{d_{1}},\ldots,{c_r}/{d_r},t_1,\ldots,t_{n-r}\right)$ with
$t_1,\ldots,t_{n-r}\in\Z$. Every solution $x$ to the linear system
of Diophantine equations $Ax = b$ is then easily obtained as $x =
Ry$.\medskip

It is well known,~\cite{HM97}, that computing the Smith normal form of
an integral matrix is computationally hard due to the unavoidable growth
of the intermediate matrix entries. Even though there are improvements
in special cases, see or~\cite[Section~9.3]{HEO05} for an
overview, we cannot expect the algorithm in this section to be practical 
in general. Especially for groups with a large torsion-subgroup,
the problem grows significantly by forming the system of linear
Diophantine equations in~(\ref{eqn:DioSys}). Therefore reducing to
a minimal generating set for the torsion subgroup first might improve this
algorithm. Note that especially finite groups are a problem here.\medskip

We can use a different and more effective approach for finitely generated
abelian groups which first considers the linear system of Diophantine
equations arising from the torsion-free part in $G$ and $H$. Afterwards,
our approach solves the equations over the torsion subgroups with varying
the right-hand-sides with respect to the solutions of the torsion-free
part.

%%%%%%%%%%%%%%%%%%%%%%%%%%%%%%%%%%%%%%%%%%%%%%%%%%%%%%%%%%%%%%%%%%%%%%%%%%%%
\section{Solving linear equations over finite abelian groups}\label{sec:FAG}
In the remainder we consider linear equations over finite abelian
groups only. Let $G$ and $H$ be finite abelian groups. Suppose
that $p_1,\ldots,p_n$ are prime numbers such that $|G| = p_1^{e_1}
\cdots p_n^{e_n}$ and $|H| = p_1^{f_1} \cdots p_n^{f_n}$ with
$e_i,f_i\in\N\cup\{0\}$. Since every homomorphism $\varphi\colon G\to H$
induces a homomorphism $\varphi_i\colon\Syl_{p_i}(G)\to\Syl_{p_i}(H)$
of the $p_i$-Sylow subgroups, we may restrict to the case that $G$
and $H$ are abelian $p$-groups. More precisely, if we identify
\[
  G = \Syl_{p_1}(G)\times\cdots\times\Syl_{p_n}(G)\quad\textrm{and}\quad
  H = \Syl_{p_1}(G)\times\cdots\times\Syl_{p_n}(G),
\]
then we can decompose $\varphi$ into homomorphisms
$\varphi_i\colon\Syl_{p_i}(G)\to\Syl_{p_i}(H)$ with
$\varphi|_{\Syl_{p_i}(G)} = \varphi_i$. This yields that $\varphi$
decomposes as $\varphi_1 \times \cdots \times \varphi_n$, where
$\varphi_1\times\cdots\times \varphi_n$ acts diagonally by
\[
  (g_1,\ldots,g_n) ^ {\varphi_1\times\cdots\times\varphi_n} 
  = ( g_1^{\varphi_1},\ldots, g_n^{\varphi_n}).
\]
Computing pre-images and the kernel of $\varphi$ can be done independently
for the $p_i$-Sylow subgroups. In particular, solving linear equations
over finite abelian groups split into independent computations for the
Sylows subgroups and hence, can easily be parallelized.

%%%%%%%%%%%%%%%%%%%%%%%%%%%%%%%%%%%%%%%%%%%%%%%%%%%%%%%%%%%%%%%%%%%%%%%%%%%%
\section{Solving linear equations over abelian $p$-groups}
Let $G$ and $H$ be abelian $p$-groups and let $\varphi\colon G\to
H$ be a homomorphism. We identify $G = \Z_{p^{e_1}}\times \cdots
\times \Z_{p^{e_n}}$ and $H = \Z_{p^{f_1}}\times \cdots \times
\Z_{p^{f_m}}$ with $e_1\leq \ldots\leq e_n$ and $f_1\leq \ldots \leq
f_m$. Further let $\{g_1,\ldots,g_n\}$ and $\{h_1,\ldots,h_m\}$ be
independent generating sets of $G$ and $H$ with $|g_i|=p^{e_i}$ and
$|h_i|=p^{f_i}$, respectively. Then we can represent $\varphi$ by an
$m$-by-$n$ matrix $A=(a_{ij})_{i,j}$ where $(a_{1j},\ldots,a_{mj}) =
(\alpha_1,\ldots,\alpha_m)$ whenever $g_j^\varphi = h_1^{\alpha_1}\cdots
h_m^{\alpha_m}$ holds.\medskip

Computing a pre-image of $b=h_1^{b_1}\cdots h_m^{b_m}\in H$ together
with the kernel of $\varphi$ is equivalent to solving the linear system of
modular equations
\begin{equation} \label{eqn:ModSysI}
  \begin{array}{ccccccc}
    a_{11} x_1 &+& \cdots &+& a_{1n} x_n &\equiv& b_1 \pmod{p^{f_1}}\\
     \vdots    && \ddots && \vdots &\vdots&\vdots\\
    a_{m1} x_1 &+& \cdots &+& a_{mn} x_n &\equiv& b_m \pmod{p^{f_m}}
  \end{array}
\end{equation}
We discuss the following methods for solving~(\ref{eqn:ModSysI}):
\begin{enumerate}\addtolength{\itemsep}{-0.75ex}
\item solving a linear system of Diophantine equations,
\item using the method for polycyclic groups described in~\cite{Eick01},
\item using a modular analog of Smith normal form in the case that $G$ and $H$ are both homocyclic,
\item using the method in (C) and lifting solutions recursively,
\item lifting a solution over $\F_p$ recursively with the Hensel lemma.
\end{enumerate}
In Section~\ref{sec:Apps} we show the application of these algorithms
to various randomly chosen equations.

%%%%%%%%%%%%%%%%%%%%%%%%%%%%%%%%%%%%%%%%%%%%%%%%%%%%%%%%%%%%%%%%%%%%%%%%%%%%
\subsection{Solving linear equations over homocyclic $p$-groups}
\label{sec:Homoc}
Let $G$ and $H$ be homocyclic $p$-groups; that is, $G\cong \Z_{p^\ell}
\times\cdots\times \Z_{p^\ell}$ for some $k$ copies of $\Z_{p^\ell}$.
We generalize the algorithm of Section~\ref{sec:DioSNF} to solve linear
equations over homocyclic groups.\medskip

Let $\varphi\colon G \to H$ be a homomorphism and let $g\in H$ be
given. Then the endomorphism $\varphi$ is represented by an $m$-by-$n$
matrix $A=(a_{ij})_{i,j}$ while $g\in H$ translates to its corresponding
exponent vector $(b_1,\ldots,b_m)$. For computing all solutions to the
system $Ax \equiv b \pmod{p^\ell}$, we use the following
modular analog of the Smith normal form:
\begin{lemma}\label{lem:MSNF}
  Let $A\in(\Z_{p^\ell})^{m\times n}$ be given. Then there exist matrices
  $L\in(\Z_{p^\ell})^{m\times m}$ and $R\in (\Z_{p^\ell})^{n\times n}$,
  which are invertible modulo $p^{\ell}$, so that $LAR$ is a diagonal
  matrix $D=\diag(d_1,\ldots,d_k,0,\ldots,0)$ whose non-zero entries
  $d_1,\ldots,d_k$ satisfy $d_i\mid d_{i+1}$ for each $1\leq i<k$.
\end{lemma}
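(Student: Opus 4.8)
The plan is to imitate the classical Smith-normal-form algorithm over $\Z$, but to work directly inside the ring $R = \Z_{p^\ell}$ and exploit the fact that $R$ is a local ring whose only ideals are the principal ideals $(p^j)$ for $0 \le j \le \ell$ (together with the zero ideal $(p^\ell) = (0)$). The key observation is that for any $a \in R$, either $a$ is a unit or $a = p^j u$ for a unique $j$ with $1 \le j \le \ell$ and some unit $u$; so one can define a \emph{$p$-valuation} $v(a) \in \{0,1,\ldots,\ell\}$ (with $v(0) = \ell$), and this valuation plays the role that absolute value plays over $\Z$. Thus I would first record the structure of $R$: it is local with maximal ideal $(p)$, its units are exactly the elements of $p$-valuation $0$, and $v(ab) = \min(\ell, v(a)+v(b))$.

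First I would describe the admissible operations: left multiplication by an elementary matrix in $\GL(m,R)$ and right multiplication by an elementary matrix in $\GL(n,R)$ — namely row/column swaps, adding an $R$-multiple of one row (column) to another, and multiplying a row (column) by a unit of $R$. Each such matrix is invertible modulo $p^\ell$, and products of them give the desired $L$ and $R$. Second, the reduction step: among all nonzero entries of the current matrix, pick one $a_{ij}$ of minimal $p$-valuation; by row and column swaps move it to position $(1,1)$. Because $v(a_{11})$ is minimal, $a_{11}$ divides every entry in its row and column in $R$ (if $v(a_{11}) = s$ and $v(a_{1k}) = t \ge s$, then $a_{1k} = a_{11}\cdot(p^{t-s} u' u^{-1})$ makes sense in $R$), so by adding suitable multiples of the first column (row) to the others I clear out the rest of the first row and column. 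This reduces to an $(m-1)\times(n-1)$ matrix, and I recurse. The process terminates because the matrix dimensions strictly decrease.

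The divisibility chain $d_i \mid d_{i+1}$ is handled exactly as in the integer case: after clearing the first row and column, if some entry $a_{k\ell}$ of the remaining block has $v(a_{k\ell}) < v(d_1)$, add its row to the first row and repeat the clearing step; since $p$-valuations are bounded below by $0$, finitely many such adjustments force $v(d_1) \le v(a_{pq})$ for all remaining entries, hence $d_1 \mid a_{pq}$ in $R$; then recurse. Equivalently — and this is cleaner — one can observe that $LAR = D$ over $R$ can be lifted: choose any lift $\tilde A \in \Z^{m\times n}$ of $A$, take its integer Smith form $\tilde L \tilde A \tilde R = \tilde D = \diag(\tilde d_1,\ldots,\tilde d_r,0,\ldots)$, reduce everything mod $p^\ell$, and note that reduction of $\tilde D$ is already diagonal with a divisibility chain in $R$; the only subtlety is that $\tilde d_i$ reducing to $0$ mod $p^\ell$ can collapse the tail, but the surviving nonzero $d_i = \tilde d_i \bmod p^\ell$ still satisfy $d_i \mid d_{i+1}$ because $\tilde d_i \mid \tilde d_{i+1}$ in $\Z$.

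The main obstacle is purely bookkeeping rather than conceptual: one must be careful that "invertible modulo $p^\ell$" is preserved under the column/row operations (it is, since $\GL(n, \Z_{p^\ell})$ is a group and each elementary operation sits in it), and that the divisibility relation $d_i \mid d_{i+1}$ is the correct notion in $R$ — i.e. that $d_i \mid d_{i+1}$ in $\Z_{p^\ell}$ is equivalent to $v(d_i) \le v(d_{i+1})$, which follows from the ideal structure. I would present the lifting argument as the proof and remark that it simultaneously gives an effective method, since integer Smith form mod $p^\ell$ is exactly what one computes in practice; the direct $R$-algorithm is then just the observation that all intermediate entries can be kept reduced mod $p^\ell$, which is the point of the lemma for the application in Section~\ref{sec:Homoc}.
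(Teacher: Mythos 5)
Your proof is correct, and your first two paragraphs (pick an entry of minimal $p$-valuation, move it to position $(1,1)$, use that it divides every entry in its row and column to clear them, recurse on the $(m-1)\times(n-1)$ block) are exactly the paper's argument; the paper additionally normalises the pivot to the pure power $p^{\nu_p(a_{11})}$ by multiplying its row by the unit part's inverse. Where you diverge is in electing to present the reduction-to-integer-Smith-normal-form argument as the actual proof: lift $A$ to $\tilde A\in\Z^{m\times n}$, compute $\tilde L\tilde A\tilde R=\tilde D$ over $\Z$, and reduce modulo $p^\ell$. That is a valid and shorter proof (unimodularity gives invertibility mod $p^\ell$, divisibility is preserved under the quotient map, and the collapsed $\tilde d_i\equiv 0$ all sit at the tail of the chain), but it somewhat defeats the purpose of the lemma in context: Section~4 has just argued that integer Smith normal form is impractical because of intermediate coefficient growth, and the point of working in $\Z_{p^\ell}$ is precisely that every intermediate entry stays reduced modulo $p^\ell$ --- which only the direct algorithm delivers. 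One further simplification you can make to the direct version: the ``repeated adjustment'' step you borrow from the integer case is unnecessary here, since the pivot is chosen of globally minimal valuation and the clearing operations (adding multiples of the pivot row/column) cannot produce entries of smaller valuation, so the diagonal valuations are automatically non-decreasing and $d_i\mid d_{i+1}$ holds without further work; all that remains is a final permutation pushing the entries that vanish modulo $p^\ell$ to the end, which is what the paper's last sentence does.
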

\begin{proof}
  We give a constructive proof for this lemma.  For a positive integer
  $n = a\cdot p^\ell$ with $\gcd(a,p)=1$, we denote by $\nu_p(n)=\ell$
  the \emph{$p$-evaluation of $n$}. Choose indices $1\leq i\leq m$
  and $1\leq j\leq n$ so that
  \[
    \nu_p(a_{ij}) = \min\{ \nu_p(a_{\iota\kappa}) \mid 1\leq\iota\leq m,
    1\leq\kappa\leq n\}
  \]
  By permuting the rows and columns of $A$, we may assume that
  $a_{ij}=a_{11}$ holds. Then $a_{11} = a\cdot p^{\nu_p(a_{11})}$
  for some integer $a$ with $\gcd(a,p)=1$. In particular, the integer
  $a$ is invertible modulo $p^{\ell}$.  Multiplying the first row
  by the inverse $\alpha$ of $a$ yields that the entries $a_{i1}$,
  with $2\leq i\leq m$, are all divisible by $\alpha a_{11}\equiv
  p^{\nu_p(a_{11})}\pmod{p^\ell}$. Similarly, the entries $\alpha a_{1j}$,
  with $2\leq j\leq n$, are all divisible by $\alpha a_{11}$.  Therefore,
  we can find matrices $L_1$ and $R_1$ so that
  \[
    L_1 A R_1 = \left( \begin{array}{cccc}
    p^{\nu_p(a_{11})} & 0 & \cdots & 0 \\
    0 &&&\\
    \vdots& &\ti A&\\
    0 &&&\\
    \end{array}\right)
  \]
  holds. Continuing with the $(m-1)\times (n-1)$ sub-matrix $\ti A$
  recursively would finally yield matrices $L$ and $R$ so that $LAR =
  D$ is diagonal. A permutation of the rows and columns of $D$ would
  then give the divisibility claimed above.
\end{proof}
The algorithm of Section~\ref{sec:DioSNF} now readily generalizes to
an algorithm for solving the linear equation $Ax\equiv b\pmod{p^\ell}$:
By Lemma~\ref{lem:MSNF}, there exist matrices $L$ and $R$ so that
\[
  LAR=\diag(d_1,\ldots,d_k,0,\ldots,0).
\]
Therefore the linear equation $Ax\equiv b\pmod{p^\ell}$ translates to
the systems $Dy\equiv Lb\pmod{p^\ell}$ and $x\equiv Ry\pmod{p^\ell}$. It
remains to check whether or not the diagonal system $Dy \equiv Lb
\pmod{p^\ell}$ has a solution and, if so, to determine all these
solutions. This is a straightforward application of elementary number
theory.

%\begin{lemma}\label{lem:CompMSNF}
%  Let $A\in(\Z_{p^\ell})^{m\times n}$ be given. Then the complexity
%  of computing the modular Smith normal form in Lemma~\ref{lem:MSNF}
%  is {\bf TODO}.
%\end{lemma}
%\begin{proof}
%  Start counting!
%% \begin{itemize}\addtolength{\itemsep}{-1ex}
%% \item Finding pivot element: $n\cdot m-1$ comparisons 
%% \item normalizing the first row: $n$ multiplications 
%% \item eliminating the first colum (including the modification of $L$)
%% \item eliminating the first row (including the modification of $R$)
%% \item {\bf Gcd-Computations?}
%% \end{itemize}
%\end{proof}
Clearly for solving the linear system $Ax \equiv b\pmod{p^\ell}$ the
explicit computation of $L$ is not necessary.  Instead we can apply the
row operations directly to the right-hand-side $b$ which is cheaper in
general. %This yields a complexity of {\bf TODO}.

%%%%%%%%%%%%%%%%%%%%%%%%%%%%%%%%%%%%%%%%%%%%%%%%%%%%%%%%%%%%%%%%%%%%%%%%%%%%
\subsection{Solving linear equations using the block structure}\label{sec:Block}
The algorithm of Section~\ref{sec:Homoc} generalizes to a method
for solving a linear system of equations over arbitrary abelian
$p$-groups as we will describe in the following.\medskip

Let $G$ and $H$ be arbitrary abelian $p$-groups. Further let
$\varphi\colon G\to H$ be a homomorphism and let $g\in H$
be given. We decompose the groups $G$ and $H$ into direct products
\[
  G = (\Z_{p^{e_1}})^{g_1}\times\cdots\times (\Z_{p^{e_\ell}})^{g_\ell} 
  \quad\textrm{and}\quad
  H = (\Z_{p^{e_1}})^{h_1}\times\cdots\times (\Z_{p^{e_\ell}})^{h_\ell}
\]
with $e_1<\ldots<e_\ell$ and $g_i,h_i\in\N\cup\{0\}$. Clearly, in 
the descending chains of subgroups
\[
  G \geq p^{e_1}G \geq \ldots \geq p^{e_\ell} G = \{0\}
  \quad\textrm{and}\quad
  H \geq p^{e_1}G \geq \ldots \geq p^{e_\ell} H = \{0\}.
\]
the factors $p^{e_i}G / p^{e_{i+1}} G$ and $p^{e_i}H/p^{e_{i+1}}H$ are
homocyclic of rank $g_{i+1}+\cdots+g_\ell$ and $h_{i+1}+\cdots+h_\ell$,
respectively. Every homomorphism $\varphi\colon G\to H$ maps the
subgroup $p^{e_i}G$ to $p^{e_i} H$ and therefore induces a homomorphism
$\varphi^{(i)}\colon G/p^{e_{i+1}}G\to H/ p^{e_{i+1}}H$
Let $\iota_i$, $\delta_i$, $\kappa_i$, and $\varepsilon_i$ denote the
natural homomorphisms so that the diagram 
\[
  \xymatrix{
   G \ar@/^5pt/[rd]^{\iota_{i+1}}\ar@/_10pt/[rdd]_{\iota_i} \ar@/^10pt/[rrr]^\varphi &   &  & H\ar@/_5pt/[ld]_{\kappa_{i+1}}\ar@/^10pt/[ldd]^{\kappa_{i}} \\
   & G/p^{e_{i+1}}G\ar[d]^{\delta_i}\ar[r]^{\varphi^{(i+1)}} &H/p^{e_{i+1}}H\ar[d]^{\varepsilon_i}&\\
   & G/p^{e_i}G \ar[r]^{\varphi^{(i)}} & H/p^{e_i}H &
  }
\]
commutes.  Since the quotients $G/p^{e_1}G$ and $H/p^{e_1}H$ are
both homocyclic, the algorithm of Section~\ref{sec:Homoc} computes
an $x_1\in G$ so that $x_1^\varphi-b \in \ker\kappa_1$ holds together
with a generating set for some $K_1 \subseteq G$ with $K_1^{\iota_1}
= \ker\varphi^{(1)}$.  In the remainder of this section, we lift these
solutions recursively to a solution of $x^\varphi = b$. The following 
lemma lifts the special solution $x_i$.
\begin{lemma}\label{lem:LifSpS}
  The solution $x_i$ lifts to a solution $x_{i+1}$ if and only
  if there exists $\lambda\in G$ such that $\lambda^{\iota_i} \in
  \ker\varphi^{(i)}$ with $x_{i+1} = x_i - \lambda$ and
  \begin{equation}\label{eqn:NecCon}
      (x_i^\varphi-b)^{\kappa_{i+1}}
    = \lambda^{\varphi\kappa_{i+1}} =
    \lambda^{\iota_{i+1}\varphi^{(i+1)}}.
  \end{equation}
\end{lemma}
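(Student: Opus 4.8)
\quad The plan is to reduce the lemma to nothing more than unwinding the definitions of ``solution'' and ``lift'' together with the commutativity of the displayed diagram. Write $S_j := \{\,x\in G \mid x^\varphi - b \in \ker\kappa_j\,\}$ for the set of solutions at level $j$, so that by construction $x_i\in S_i$. The one fact to record first is that, since $\kappa_i = \kappa_{i+1}\varepsilon_i$, one has $(x^\varphi-b)^{\kappa_i} = \big((x^\varphi-b)^{\kappa_{i+1}}\big)^{\varepsilon_i}$ for every $x\in G$, whence $S_{i+1}\subseteq S_i$. Combined with $x_i\in S_i$ this pins down the ``gauge freedom'': if $x_{i+1}\in S_{i+1}$ and $\lambda := x_i - x_{i+1}$, then
\[
  \lambda^{\varphi\kappa_i} = (x_i^\varphi-b)^{\kappa_i} - (x_{i+1}^\varphi-b)^{\kappa_i} = 0,
\]
and since $\varphi\kappa_i = \iota_i\varphi^{(i)}$ (the diagram commutes) this says $\lambda^{\iota_i}\in\ker\varphi^{(i)}$. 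So the hypothesis $\lambda^{\iota_i}\in\ker\varphi^{(i)}$ in the statement is in fact automatic and merely records where the algorithm should look for the correction $\lambda$, namely inside $K_i := \iota_i^{-1}(\ker\varphi^{(i)})$, equivalently inside the coset of level-$i$ solutions already in hand.

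With this in place both implications are a single computation. For ``only if'', if $x_i$ lifts to $x_{i+1}\in S_{i+1}$, set $\lambda := x_i - x_{i+1}$; applying $\varphi\kappa_{i+1}$ and using $(x_{i+1}^\varphi-b)^{\kappa_{i+1}}=0$ gives $(x_i^\varphi-b)^{\kappa_{i+1}} = \lambda^{\varphi\kappa_{i+1}}$, and rewriting $\varphi\kappa_{i+1}=\iota_{i+1}\varphi^{(i+1)}$ turns this into exactly~(\ref{eqn:NecCon}), while $\lambda^{\iota_i}\in\ker\varphi^{(i)}$ was checked above. For ``if'', given $\lambda$ with $\lambda^{\iota_i}\in\ker\varphi^{(i)}$ and~(\ref{eqn:NecCon}), put $x_{i+1}:=x_i-\lambda$ and read the same computation backwards: $(x_{i+1}^\varphi-b)^{\kappa_{i+1}} = (x_i^\varphi-b)^{\kappa_{i+1}} - \lambda^{\iota_{i+1}\varphi^{(i+1)}} = 0$ by~(\ref{eqn:NecCon}), so $x_{i+1}\in S_{i+1}$, and $x_i-x_{i+1}=\lambda$ lies in $K_i$, so $x_{i+1}$ genuinely refines $x_i$.

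None of this is deep: the entire content is bookkeeping with the natural maps $\iota_i,\kappa_i,\delta_i,\varepsilon_i,\varphi^{(i)}$, and the two points that need a little care are getting the order of composition right in the paper's right-action exponent convention and being explicit about which triangle or square of the diagram is invoked at each step. The only genuine obstacle is expository: one must first fix the precise meaning of ``$x_i$ lifts to a solution $x_{i+1}$'' — a solution at the finer level $i+1$, which by the inclusion $S_{i+1}\subseteq S_i$ automatically reduces, modulo $K_i$, to the level-$i$ solution $x_i$ — after which the lemma is literally the two displays above.
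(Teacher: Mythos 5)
Your proof is correct and follows essentially the same route as the paper: both directions are the same direct computation with $x_{i+1}=x_i-\lambda$, using the commutativity $\varphi\kappa_j=\iota_j\varphi^{(j)}$ and the inclusion $\ker\kappa_{i+1}\leq\ker\kappa_i$. If anything, yours is slightly more complete, since you explicitly verify Equation~(\ref{eqn:NecCon}) in the ``only if'' direction, a step the paper's proof leaves implicit.
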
 \begin{proof}
  Let $\lambda\in G$ be as above. Then 
  \[
    (x_{i+1}^\varphi - b )^{\kappa_{i+1}}
  = ((x_i-\lambda)^\varphi - b)^{\kappa_{i+1}} = x_i^{\varphi\kappa_{i+1}}
  - \lambda^{\varphi\kappa_{i+1}} - b^{\kappa_{i+1}} = 0,
  \] 
  and hence $x_{i+1}^\varphi - b$ lifts the solution $x_i$.  Assume that
  the solution $x_i$ lifts to the solution $x_{i+1}$; that is, we both
  have $x_{i+1} ^ \varphi - b \in \ker\kappa_{i+1}$ and $x_i^\varphi - b
  \in \ker\kappa_i$. Recall that $\ker\kappa_i = p^{e_i}H \geq p^{e_{i+1}}
  H = \ker\kappa_{i+1}$. This yields that $(x_{i+1}-x_i)^\varphi \in
  \ker\kappa_i$ and hence
  \[
    0 = ( x_{i+1} - x_i ) ^ {\varphi\kappa_i} = (x_{i+1} - x_i ) ^
    {\iota_i\varphi^{(i)}}.
  \]
  Therefore $(x_{i+1} - x_i)^{\iota_i} \in \ker\varphi^{(i)}$.
\end{proof}
It remains to lift the kernel of the linear equation.  Let $K_i\subseteq
G$ be a pre-image of $\ker\varphi^{(i)}$ in $G$. Then a generating set for
$K_i$ is easily obtained from the generating sets of $\ker\varphi^{(i)}$
and $\ker\iota_i$. Suppose that $K_i = \la k_1,\ldots, k_\ell\ra$
holds. Since we have that $\iota_i\varphi^{(i)} = \varphi\kappa_i$,
it follows that
\[
  K_i^{\varphi\kappa_{i+1}} = 
  K_i^{\iota_{i+1}\varphi^{(i+1)}} 
  \leq \ker\varepsilon_i 
  = p^{e_i}H / p^{e_{i+1}} H.
\]
It suffices to check whether or not $x_i^\varphi-b \in \la
k_1^{\varphi\kappa_{i+1}}, \ldots, k_\ell^{\varphi\kappa_{i+1}}\ra$ holds.
Since $K_i^{\varphi\kappa_{i+1}}\leq p^{e_i}H/p^{e_{i+1}}H$, the latter
condition is equivalent to solve an equation over the homocyclic group
$p^{e_i}H/p^{e_{i+1}}H$. A solution to this latter system yields a lift
of the solution $x_i$ as described in Lemma~\ref{lem:LifSpS}. Furthermore,
the following lemma outlines the lift of $K_i$ to $K_{i+1}$.
\begin{lemma}\label{lem:LifKer}
  Let $K_i\leq G$ be given so that $K_i^{\iota_i} = \ker\varphi^{(i)}$
  holds. Then it holds that
  \[
    K_{i+1} 
    = \la \{k\in K_i\mid k^{\varphi\kappa_{i+1}}=0\}
      \cup \{  p^{e_{i+1}-e_i}\, k\mid k\in K_i\} \ra.
  \]
\end{lemma}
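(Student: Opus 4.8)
The plan is to show the claimed generating set on the right, call it $L$, satisfies $L^{\iota_{i+1}} = \ker\varphi^{(i+1)}$, which is exactly the defining property $K_{i+1}$ must have. First I would unwind the two families of generators. The first family, $\{k \in K_i \mid k^{\varphi\kappa_{i+1}} = 0\}$, consists precisely of the elements of $K_i$ whose image under $\varphi$ already dies in $H/p^{e_{i+1}}H$; since $\varphi\kappa_{i+1} = \iota_{i+1}\varphi^{(i+1)}$, such a $k$ has $k^{\iota_{i+1}} \in \ker\varphi^{(i+1)}$. The second family, $\{p^{e_{i+1}-e_i}k \mid k \in K_i\}$, maps under $\iota_{i+1}$ into $p^{e_{i+1}-e_i}(G/p^{e_{i+1}}G)$, and one checks via the commuting diagram ($\iota_{i+1}\delta_i = \iota_i$, $\varphi^{(i+1)}\varepsilon_i = \delta_i\varphi^{(i)}$... wait, rather $\delta_i\varphi^{(i)} = \varphi^{(i+1)}\varepsilon_i$) that $(p^{e_{i+1}-e_i}k)^{\iota_{i+1}\varphi^{(i+1)}} = p^{e_{i+1}-e_i} \cdot k^{\iota_{i+1}\varphi^{(i+1)}}$, and since $k \in K_i$ gives $k^{\iota_i} \in \ker\varphi^{(i)}$, i.e. $k^{\iota_{i+1}\varphi^{(i+1)}} \in \ker\varepsilon_i = p^{e_i}H/p^{e_{i+1}}H$, multiplying by $p^{e_{i+1}-e_i}$ kills it. So $L^{\iota_{i+1}} \subseteq \ker\varphi^{(i+1)}$ for both families, giving the inclusion $\subseteq$.

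For the reverse inclusion, suppose $y \in G$ with $y^{\iota_{i+1}} \in \ker\varphi^{(i+1)}$. Applying $\delta_i$ and using $\iota_{i+1}\delta_i = \iota_i$ together with $\varphi^{(i+1)}\varepsilon_i = \delta_i\varphi^{(i)}$ shows $y^{\iota_i} \in \ker\varphi^{(i)} = K_i^{\iota_i}$, so there is $k \in K_i$ with $(y - k)^{\iota_i} = 0$, i.e. $y - k \in \ker\iota_i = p^{e_i}G$. Write $y - k = p^{e_i}z$ for some $z \in G$. Now $p^{e_i}z = y - k$ has $\iota_{i+1}$-image lying in $\ker\varphi^{(i+1)}$ minus $k^{\iota_{i+1}}$; applying $\varphi^{(i+1)}$ we get that $(p^{e_i}z)^{\iota_{i+1}\varphi^{(i+1)}} = -k^{\iota_{i+1}\varphi^{(i+1)}}$, which lies in $\ker\varepsilon_i$, so the element $(p^{e_i}z)^{\iota_{i+1}\varphi^{(i+1)}} = p^{e_i}\cdot z^{\iota_{i+1}\varphi^{(i+1)}}$ is annihilated by $p^{e_{i+1}-e_i}$ in the homocyclic piece — this is automatic since $p^{e_i}H/p^{e_{i+1}}H$ has exponent $p^{e_{i+1}-e_i}$, so it gives no new information directly; instead I would argue that $k^{\varphi\kappa_{i+1}} = -p^{e_i}z^{\varphi\kappa_{i+1}} \in p^{e_i}H/p^{e_{i+1}}H$, and then decompose $k$ itself against this. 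The cleaner route: since $k^{\varphi\kappa_{i+1}} \in p^{e_i}H/p^{e_{i+1}}H$ and $K_i^{\varphi\kappa_{i+1}} \le p^{e_i}H/p^{e_{i+1}}H$ is a homocyclic-group image, the surjectivity considerations let us adjust $k$ by an element of $\{k' \in K_i \mid k'^{\varphi\kappa_{i+1}} = 0\}$ plus a $p^{e_{i+1}-e_i}$-multiple of an element of $K_i$ — which is exactly how the generating set is built.

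The main obstacle I expect is this reverse inclusion, specifically handling the ``middle'' contribution: writing a general $y$ with $y^{\iota_{i+1}} \in \ker\varphi^{(i+1)}$ as a combination of (a) the genuine kernel elements from $K_i$, (b) the scaled elements $p^{e_{i+1}-e_i}k$, and (c) elements of $\ker\iota_{i+1} = p^{e_{i+1}}G = 0$ in $G/p^{e_{i+1}}G$, i.e. elements irrelevant after applying $\iota_{i+1}$. The subtlety is that $K_i$ only controls $\ker\varphi^{(i)}$ at the coarser level $G/p^{e_i}G$, so lifting requires showing the obstruction to lifting a kernel element from level $i$ to level $i+1$ lands in $p^{e_i}H/p^{e_{i+1}}H$ and can be trivialized either by passing to a $p^{e_{i+1}-e_i}$-multiple (which kills that homocyclic quotient outright) or by a correction inside $\{k \in K_i \mid k^{\varphi\kappa_{i+1}} = 0\}$. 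Once the bookkeeping with the commuting diagram is set up carefully — tracking that $p^{e_i}G = \ker\iota_i \supseteq \ker\iota_{i+1} = p^{e_{i+1}}G$ and that $\varphi$ maps $p^{e_i}G$ into $p^{e_i}H$ — the rest is a diagram chase, and I would present it as such rather than in coordinates.
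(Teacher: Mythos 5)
Your forward inclusion is correct and essentially matches the paper's. The genuine gap is in the reverse inclusion, which you flag as the main obstacle but never close: the plan of writing $y$ with $y^{\iota_{i+1}}\in\ker\varphi^{(i+1)}$ as $y=k+p^{e_i}z$ with $k\in K_i$, $p^{e_i}z\in\ker\iota_i$, and then absorbing both pieces into the two generating families by unspecified ``surjectivity considerations'' is not an argument, and under the hypothesis as you read it --- $K_i$ is \emph{any} subgroup with $K_i^{\iota_i}=\ker\varphi^{(i)}$ --- the statement you are trying to prove is in fact false. Take $G=\la g_1\ra\times\la g_2\ra\cong\Z_p\times\Z_{p^2}$, $H=\la h_1\ra\times\la h_2\ra\cong\Z_p\times\Z_{p^2}$ and $\varphi\colon g_1\mapsto ph_2$, $g_2\mapsto h_2$. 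Then $\ker\varphi^{(1)}=\la\bar g_1\ra$ and $K_1=\la g_1\ra$ satisfies $K_1^{\iota_1}=\ker\varphi^{(1)}$; but both generating families on the right-hand side collapse to $\{0\}$ (since $g_1^{\varphi\kappa_2}=ph_2\neq 0$ and $pg_1=0$), whereas $\ker\varphi^{(2)}=\ker\varphi=\la g_1-pg_2\ra\neq 0$. The hypothesis you are missing is the convention fixed in the text just before the lemma: $K_i$ is the \emph{full} pre-image of $\ker\varphi^{(i)}$, so that $\ker\iota_i=p^{e_i}G\leq K_i$. Your counterexample-prone decomposition $y=k+p^{e_i}z$ is exactly where this bites, because $p^{e_i}z$ need not lie in $\la A\cup B\ra$ otherwise.

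Once the full pre-image property is available, the reverse inclusion is much shorter than what you sketch, and it is what the paper does: if $y^{\iota_{i+1}}\in\ker\varphi^{(i+1)}$, then $y^{\varphi}\in\ker\kappa_{i+1}=p^{e_{i+1}}H\leq p^{e_i}H=\ker\kappa_i$, hence $y^{\iota_i}\in\ker\varphi^{(i)}$ and therefore $y\in K_i$; since moreover $y^{\varphi\kappa_{i+1}}=0$, the element $y$ already belongs to the first generating family $\{k\in K_i\mid k^{\varphi\kappa_{i+1}}=0\}$. In other words, the key fact is $K_{i+1}\leq K_i$, and no decomposition against the homocyclic quotient $p^{e_i}H/p^{e_{i+1}}H$ is needed (even with the corrected hypothesis your leftover term $p^{e_i}z$ lands in $K_i$ but not, in general, in either family separately, so that route still does not assemble). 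As written, the proposal does not establish the reverse inclusion.
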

\begin{proof}
  Let $k\in K_i$ be so that $k^{\varphi\kappa_{i+1}} = 0$. Then, as
  $\varphi\kappa_{i+1} = \iota_{i+1} \varphi^{(i+1)}$, it holds that
  $k^{\iota_{i+1}\varphi^{(i+1)}} = 0$ and therefore, $k^{\iota_{i+1}}
  \in \ker\varphi^{(i+1)}$.  Write $\Delta p = p^{e_{i+1}-e_i}$ and let
  $k\in K_i$ be given. Then it follows that
  \[  
    (\Delta p\, k)^{\iota_{i+1}\varphi^{(i+1)}} 
  = (\Delta p\, k)^{\varphi \kappa_{i+1}}
  \]
  and, since $k^{\varphi}\in p^{e_i}H=\ker\kappa_i$, we also have that
  \[
    (\Delta p\,k)^{\varphi} = \Delta p\, k^{\varphi} \in p^{e_{i+1}}H =
    \ker\kappa_{i+1}.
  \]
  This yields that $L = \Delta p\, K_i$ is contained in $K_{i+1}$ and,
  as $k^\varphi\in\ker\kappa_{i+1} \leq \ker\kappa_i$, it follows that
  $K_{i+1}\leq K_i$.  Hence, every element $g\in K_{i+1}\setminus L$
  can be written as $g=ab$ with $b\in L$ and $a\in K_{i+1}$ with $aL
  \neq L$. Then we get
  \[
    0 = g ^ {\iota_{i+1}\varphi^{(i+1)}} 
      = (ab) ^ {\varphi\kappa_{i+1}}
      = a ^ {\varphi\kappa_{i+1}} \, b^{\varphi\kappa_{i+1}}
  \]
  where $b^{\varphi\kappa_{i+1}} = 0$. Hence the element $a\in K_i$ 
  satisfies $a^{\varphi\kappa_{i+1}} = 0$.
\end{proof}
Note that the generating set of $K_{i+1}$ defined in
Lemma~\ref{lem:LifKer} may contain redundancies. These significantly
affect the complexity of the algorithm in Section~\ref{sec:Homoc}.

%%%%%%%%%%%%%%%%%%%%%%%%%%%%%%%%%%%%%%%%%%%%%%%%%%%%%%%%%%%%%%%%%%%%%%%%%%%%
\subsection{On a number theoretical approach}
In this section we describe a number theoretical approach
which avoids the redundancies introduced in the lifting of
the kernels in Section~\ref{sec:Block}. Our overall strategy
for solving~(\ref{eqn:ModSys}) is an induction on the exponents
$f_1,\ldots,f_m$. More precisely, we solve the linear system $Ax\equiv
b\pmod p$ over the finite field $\F_p$ with Gaussian elimination. Then
we lift the obtained solutions recursively by applying the Hensel
lemma. First we only consider an abelian $p$-group $G$ and an endomorphism
$\varphi\colon G\to G$.

%%%%%%%%%%%%%%%%%%%%%%%%%%%%%%%%%%%%%%%%%%%%%%%%%%%%%%%%%%%%%%%%%%%%%%%%%%%%
\subsubsection{Solving endomorphic equations over finite $p$-groups}
\label{sec:HenselEndo}
Let $G$ be a finite $p$-group and let $\varphi\colon G\to G$ be an
endomorphism. Further let $b\in G$ be given. We describe an algorithm
for solving the linear equation $x^\varphi = b$ or, equivalently, the
linear system of modular equations
\begin{equation}\label{eqn:LSME}
  \begin{array}{ccccccc}
    a_{11} x_1 &+& \cdots &+& a_{1m} x_m &\equiv& b_1 \pmod{p^{e_1}} \\
      \vdots   & & \ddots & & \vdots     & & \vdots \\
    a_{m1} x_1 &+& \cdots &+& a_{mm} x_m &\equiv& b_m \pmod{p^{e_m}}.
  \end{array}
\end{equation}
Note that the matrix $A = (a_{ij})_{1\leq i,j\leq m}$ in~(\ref{eqn:LSME})
satisfies the condition
\begin{equation}\label{eqn:DivCond}
  p^{e_i-e_{\min\{i,j\}}}\mid a_{ij}\quad\textrm{ for each }
  1\leq i,j\leq m,
\end{equation}
as it corresponds to the endomorphism $\varphi$;
see also~\cite{Gal08}. Denote the linear system
of equations in~(\ref{eqn:LSME}) by $Ax \equiv
b\pmod{[p^{e_1},\ldots,p^{e_m}]}$.\smallskip

Clearly, using Gaussian elimination, we can find all solutions to
the linear system $Ax \equiv b \pmod{[p,\ldots,p]}$ efficiently.
Every solution $x = (x_1,\ldots,x_m)$ to the system over the finite
fields $\F_p$ has the form $x = \xi_0 + \xi_1 t_1 + \cdots + \xi_r
t_r$ where, for each $1\leq i\leq r$, it holds that $A\xi_i \equiv 0
\pmod{[p,\ldots,p]}$ and $t_i \in\{0,\ldots,p-1\}$.\smallskip

The overall idea for solving~(\ref{eqn:LSME}) is to lift these solutions
simultaneously by keeping the coefficients $t_1,\ldots,t_r$ in the
finite field $\F_p$.  This yields that, if $|\ker\varphi| = p^r$,
we obtain $r$ independent homogeneous solutions $\xi_1,\ldots,\xi_r$.
For $\ell\in\{1,\ldots,e_m\}$ and $k = \min\{i \mid \ell\leq e_i\}$,
we assume that all solutions to the linear system $Ax \equiv b
\pmod{[p^{e_1},\ldots,p^{e_{k-1}},p^{\ell},\ldots,p^{\ell}]}$ are given by
\begin{equation}\label{eqn:SolsToLift}
  x = \xi_0 + t_1\xi_1 + \cdots + t_r\xi_r,
\end{equation}
where $t_i\in\{0,\ldots,p-1\}$ for each $1\leq i\leq
r$. Applying the Hensel lemma, we lift the solutions
in~(\ref{eqn:SolsToLift}) to solutions to $Ax \equiv b
\pmod{[p^{e_1},\ldots,p^{e_{k-1}},p^{\ell+1},\ldots,p^{\ell+1}]}$. For
this purpose, we consider the modular equations
\[
  \begin{array}{ccccccc}
    a_{k1} x_1 &+& \cdots &+& a_{km} x_m &\equiv& b_k \pmod{p^{\ell+1}} \\
     \vdots    & & \ddots & & \vdots     & &    \vdots          \\
    a_{m1} x_1 &+& \cdots &+& a_{mm} x_m &\equiv& b_m \pmod{p^{\ell+1}}.
  \end{array}
\]
By condition~(\ref{eqn:DivCond}), the solutions $x_i \pmod{p^{e_i}}$,
for each $1\leq i<k$, do not need to be considered anymore. Similar to
Section~\ref{sec:Homoc}, we can find a matrix $L\in \Z^{m\times m}$,
which is invertible modulo $p^{\ell+1}$, so that $\ti A = LA$ is the
matrix of an equivalent system of modular equations so that $\ti A =
(\ti a_{ij})_{k\leq i\leq m, 1\leq j\leq m}$ still satisfies
\[
  p^{e_i-e_{\min\{i,j\}}}\mid \ti a_{ij}\quad\textrm{ for each }
  k\leq i \leq m\textrm{ and }1\leq j< k,
\]
and the sub-matrix $( \ti a_{ij} )_{k\leq i,j\leq m}$ is
upper-triangular. Assume that all solutions to $A x \equiv
b \pmod{[p^{e_1},\ldots,p^{e_{k-1}}, p^{\ell}, \ldots, p^{\ell},
p^{\ell+1}, \ldots, p^{\ell+1}]}$ are given \emph{uniquely} by
\begin{equation}\label{eqn:Sols}
  x = \xi_0 + t_1 \xi_1 + \cdots + t_r \xi_r
\end{equation}
with $t_i\in\{0,\ldots,p-1\}$; that is, the solutions
$x_m,x_{m-1},\ldots,x_{n-1}$ modulo $p^\ell$ are already lifted to
solutions modulo $p^{\ell+1}$.  We show how to lift the solutions $x_n =
x_n(t_1,\ldots,t_r)$ modulo ${p^{\ell}}$ modulo $p^{\ell+1}$. Consider
the modular equation
\[
  a_{l1} x_1 + \cdots + a_{lk} x_k + a_{ln} x_n + \cdots + a_{lm} x_m 
  \equiv b_l \pmod{p^{\ell+1}}
\]
for some $k\leq l\leq m$. This equation can be considered
a polynomial $\ti f\in\Z[x_n]$ with zeros modulo $p^\ell$
given by $x_n = x_n(t_1,\ldots,t_r)$ for any choice of
$t_1,\ldots,t_r\in\{0,\ldots,p-1\}$. More precisely, we define $\ti f(x_n)
= f(x_n) - b_l$ where
\[
  f(x_n) = a_{l1} x_1 + \cdots + a_{lk} x_k + a_{ln} x_n + \cdots + a_{lm} x_m.
\]
The Hensel lemma now applies to the polynomial $\ti f$ and its zeros
$x_n(t_1,\ldots,t_r)$:  Let $\ti f'(x_n) = \ti a_{ln}$ denote the formal
derivation of $\ti f$. Using the linearity of the polynomial $f$, we
obtain the following lemma which gives a condition for a unique lift of
the zeros $x_n = x_n(t_1,\ldots,t_r)$.
\begin{lemma}\label{lem:UniqueLift}
  If $\ti f'(x_n) \not \equiv 0 \pmod p$ holds, then the solutions $x_n =
  x_n(t_1,\ldots,t_r)$ lift uniquely to $x_n + t\cdot p^\ell$ where
  $t\in\{0,\ldots,p-1\}$ is given by
  \begin{eqnarray} \label{eqn:UniqLift}
    t &\equiv& - (\ti a_{ln})^{-1}\:\left(\frac{f(\xi_0)-b_h}{p^\ell} 
    + \frac{f(\xi_1)}{p^\ell}\,t_1 + \cdots + \frac{f(\xi_r)}{p^\ell}\,t_r 
    \right) \pmod p.
  \end{eqnarray}
\end{lemma}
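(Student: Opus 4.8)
The statement is a direct application of Hensel's lemma to the linear polynomial $\tilde f$, so the plan is to unwind the definitions and carry out the standard Hensel computation, taking care that linearity makes the lift both possible and unique under the stated hypothesis. First I would fix $\ell$, $k$, and a particular equation index $l$ with $k\le l\le m$, and recall that by hypothesis~(\ref{eqn:Sols}) every solution modulo the mixed modulus $[p^{e_1},\ldots,p^{e_{k-1}},p^{\ell},\ldots,p^{\ell},p^{\ell+1},\ldots,p^{\ell+1}]$ is written uniquely as $x=\xi_0+t_1\xi_1+\cdots+t_r\xi_r$ with $t_i\in\{0,\ldots,p-1\}$, and that the component $x_n=x_n(t_1,\ldots,t_r)$ is the one still to be lifted from modulus $p^{\ell}$ to $p^{\ell+1}$. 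A lift of $x_n$ modulo $p^{\ell+1}$ has the form $x_n+t\cdot p^{\ell}$ for some $t$, and two such lifts with $t,t'\in\{0,\ldots,p-1\}$ coincide modulo $p^{\ell+1}$ iff $t\equiv t'\pmod p$, i.e.\ iff $t=t'$; so ``uniqueness'' amounts to pinning down $t$ modulo $p$.

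Next I would substitute $x_n+t\,p^{\ell}$ into $\tilde f$ and use that $\tilde f$ is affine-linear in $x_n$: since $\tilde f'(x_n)=\tilde a_{ln}$ is constant, Taylor's formula is exact and gives
\[
  \tilde f(x_n+t\,p^{\ell}) = \tilde f(x_n) + \tilde a_{ln}\,t\,p^{\ell}.
\]
Because $x_n=x_n(t_1,\ldots,t_r)$ is a zero of $\tilde f$ modulo $p^{\ell}$, we have $p^{\ell}\mid \tilde f(x_n)$, and the congruence $\tilde f(x_n+t\,p^{\ell})\equiv 0\pmod{p^{\ell+1}}$ becomes, after dividing by $p^{\ell}$,
\[
  \frac{\tilde f(x_n)}{p^{\ell}} + \tilde a_{ln}\,t \equiv 0 \pmod p.
\]
Under the hypothesis $\tilde f'(x_n)=\tilde a_{ln}\not\equiv 0\pmod p$, the element $\tilde a_{ln}$ is invertible modulo $p$, so this linear congruence has the unique solution $t\equiv -(\tilde a_{ln})^{-1}\,\tilde f(x_n)/p^{\ell}\pmod p$. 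It then remains to expand $\tilde f(x_n)/p^{\ell}$ in terms of the parameters $t_1,\ldots,t_r$: writing $\tilde f(x_n)=f(x)-b_h$ with $x=\xi_0+t_1\xi_1+\cdots+t_r\xi_r$ and using the $\Z$-linearity of $f$, we get $f(x)=f(\xi_0)+t_1f(\xi_1)+\cdots+t_rf(\xi_r)$, whence
\[
  \frac{\tilde f(x_n)}{p^{\ell}} = \frac{f(\xi_0)-b_h}{p^{\ell}} + \frac{f(\xi_1)}{p^{\ell}}\,t_1 + \cdots + \frac{f(\xi_r)}{p^{\ell}}\,t_r,
\]
which is formula~(\ref{eqn:UniqLift}).

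The one point that needs care, and which I regard as the main (minor) obstacle, is justifying that every summand $f(\xi_i)$ is actually divisible by $p^{\ell}$, so that the displayed expression makes sense as an integer modulo $p$. This follows from the inductive setup: $\xi_0$ is a solution modulo $p^{\ell}$ of the $l$-th equation (so $p^{\ell}\mid f(\xi_0)-b_h$), and each $\xi_i$ for $i\ge 1$ is a homogeneous solution, i.e.\ $A\xi_i\equiv 0$ modulo the relevant moduli, which for row $l$ gives $p^{\ell}\mid f(\xi_i)$; one must also note that the $t_i$ are treated as formal parameters in $\{0,\ldots,p-1\}$ so that the congruence for $t$ holds identically in $t_1,\ldots,t_r$. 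Finally, the linearity of $f$ is what makes the Hensel lift \emph{exact} rather than merely first-order, and in particular guarantees that the same $t$ works simultaneously for the whole parametrized family of zeros, which is exactly the uniqueness asserted. Assembling these observations yields the lemma. $\Box$
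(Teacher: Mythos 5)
Your proposal is correct and is essentially the paper's own argument: the paper simply says the lemma ``follows immediately from the Hensel lemma,'' and you have carried out exactly that application, using the exactness of the Taylor expansion for the affine-linear $\ti f$, the invertibility of $\ti a_{ln}$ modulo $p$, and the linearity of $f$ to expand $\ti f(x_n)/p^\ell$ in the parameters $t_1,\ldots,t_r$. Your added check that $p^\ell$ divides $f(\xi_0)-b_h$ and each $f(\xi_i)$ is a worthwhile detail the paper leaves implicit.
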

\begin{proof}
  The proof follows immediately from the Hensel lemma.
\end{proof}
Recall that a solution $x_n(t_1,\ldots,t_r)$ modulo $p^\ell$ may not
lift to a solution modulo $p^{\ell+1}$. The following lemma gives a
sufficient and necessary condition for such a lift.
\begin{lemma}\label{lem:NonUni}
  If $\ti f'(x_n) \equiv 0 \pmod p$ holds, then the
  solutions $x_n = x_n(t_1,\ldots,t_r)$ lift if and only if
  the coefficients $t_1,\ldots,t_r\in\{0,\ldots,p-1\}$ satisfy
  \begin{equation}\label{eqn:CondLift}
    \frac{f(\xi_0)-b_h}{p^\ell} + \frac{f(\xi_1)}{p^\ell}\,t_1 
    + \cdots + \frac{f(\xi_r)}{p^\ell}\,t_r \equiv 0 \pmod p.
  \end{equation}
  If this is the case, then the solutions $x_n = x_n(t_1,\ldots,t_r)$ 
  lift to $x_n+t_{r+1}\,p^\ell$ for each $t_{r+1}\in\{0,\ldots,p-1\}$.
\end{lemma}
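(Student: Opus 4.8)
The plan is to deduce the lemma from the \emph{singular} case of Hensel's lemma, exactly as Lemma~\ref{lem:UniqueLift} was obtained from the regular case; only the explicit shape of condition~(\ref{eqn:CondLift}) requires a short extra computation. Write $a=x_n(t_1,\ldots,t_r)$ for the component to be lifted. By the standing assumption, $a$ is a zero of $\ti f$ modulo $p^\ell$, so $\ti f(a)=p^\ell s$ for an integer $s$ depending on $t_1,\ldots,t_r$. Since $\ti f$ has degree at most one in $x_n$, its Taylor expansion at $a$ terminates after the linear term, giving the exact identity $\ti f(a+t\,p^\ell)=\ti f(a)+\ti f'(x_n)\,t\,p^\ell$ with the constant $\ti f'(x_n)=\ti a_{ln}$. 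Hence $a+t\,p^\ell$ is a zero of $\ti f$ modulo $p^{\ell+1}$ if and only if $p^\ell s+\ti a_{ln}\,t\,p^\ell\equiv 0\pmod{p^{\ell+1}}$, i.e.\ if and only if $s+\ti a_{ln}\,t\equiv 0\pmod p$. Under the hypothesis $\ti f'(x_n)\equiv 0\pmod p$ the term $\ti a_{ln}\,t$ vanishes modulo $p$, and the congruence collapses to $s\equiv 0\pmod p$, a condition independent of $t$. This yields the stated dichotomy at once: when $s\not\equiv 0\pmod p$ no $t\in\{0,\ldots,p-1\}$ gives a lift, and when $s\equiv 0\pmod p$ every such $t$ does, so the solution lifts to $x_n+t_{r+1}\,p^\ell$ for each $t_{r+1}\in\{0,\ldots,p-1\}$, a genuinely new free parameter.

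It remains to rewrite $s=\ti f(a)/p^\ell$ as the left-hand side of~(\ref{eqn:CondLift}). Here I would invoke that $f$ is the homogeneous linear form given by the $l$-th row of the transformed coefficient matrix: from $x=\xi_0+t_1\xi_1+\cdots+t_r\xi_r$ and linearity, $f(x)=f(\xi_0)+t_1 f(\xi_1)+\cdots+t_r f(\xi_r)$, so $\ti f=f-b_l$ takes the value $(f(\xi_0)-b_l)+t_1 f(\xi_1)+\cdots+t_r f(\xi_r)$ at the point in question, and therefore
\[
  s = \frac{f(\xi_0)-b_l}{p^\ell} + \frac{f(\xi_1)}{p^\ell}\,t_1 + \cdots + \frac{f(\xi_r)}{p^\ell}\,t_r .
\]
Thus $s\equiv 0\pmod p$ is precisely~(\ref{eqn:CondLift}), which finishes the proof. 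As a sanity check, the same identity for $s$, multiplied by $-(\ti a_{ln})^{-1}$, reproduces formula~(\ref{eqn:UniqLift}) in the regular case treated in Lemma~\ref{lem:UniqueLift}.

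No step here is a genuine obstacle; the only point that must be recorded rather than re-derived is that the divisions by $p^\ell$ in the formula for $s$ are legitimate, i.e.\ that at this stage of the recursion $f(\xi_0)\equiv b_l\pmod{p^\ell}$ and $f(\xi_i)\equiv 0\pmod{p^\ell}$ for $1\leq i\leq r$. This follows immediately from the standing hypothesis that~(\ref{eqn:Sols}) parametrises \emph{all} solutions of the system in which equation $l$ still appears at modulus $p^\ell$ (take $t_i=0$ for all $i$, and then $t_i=1$ with the remaining parameters zero, using $p\geq 2$), so it is an invariant maintained by the lifting procedure of Section~\ref{sec:HenselEndo} and needs no separate argument. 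One should only bear in mind the purely notational fact that $f$, and hence $\ti f'=\ti a_{ln}$, refers to the row of the matrix $\ti A=LA$ constructed just before Lemma~\ref{lem:UniqueLift}, not to the original $A$.
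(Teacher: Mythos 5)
Your proposal is correct and follows essentially the same route as the paper: the paper's proof is simply the citation ``follows immediately from the Hensel lemma,'' and what you have written is precisely the explicit singular-case Hensel computation (exact linear Taylor expansion, reduction to $s\equiv 0\pmod p$, and the rewriting of $s$ via linearity as the left-hand side of~(\ref{eqn:CondLift})) that this citation suppresses. Your closing remarks -- that the divisions by $p^\ell$ are justified because~(\ref{eqn:Sols}) parametrises all solutions at the current moduli, and that $f$ and $\ti f'=\ti a_{ln}$ refer to the row of $\ti A=LA$ (so the $b_h$ in~(\ref{eqn:CondLift}) should read $b_l$) -- are accurate clarifications of points the paper leaves implicit.
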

\begin{proof} 
  The proof follows immediately from the Hensel lemma. 
\end{proof}
The linearity of~(\ref{eqn:CondLift}) allows to eliminate a free
variable $t_s$, say, if this equation is not trivially satisfied; that
is, at least one coefficient does not vanish modulo $p$. More precisely,
writing $\gamma_i = {f(\xi_i)}/{p^\ell}$, for each $1\leq i\leq r$,
and $\gamma_0 = {(f(\xi_0)-b)}/{p^\ell}$, we may have that $f(\xi_s)
\not \equiv 0 \pmod{p^{\ell+1}}$ but $f(\xi_{s+1}) \equiv \ldots \equiv
f(\xi_r) \equiv 0 \pmod{p^{\ell+1}}$. Then we may restrict
\begin{equation} \label{eqn:ElimTs}
  t_s \equiv -\gamma_s^{-1} \left(
  \gamma_0+\gamma_1 t_1 + \cdots + \gamma_{s-1} t_{s-1} \right) \pmod p.
\end{equation}
The following lemma determines an independent subset of solutions
of~(\ref{eqn:Sols}) which lift by Lemma~\ref{lem:NonUni}.
\begin{lemma}
  If $f(\xi_s)\not\equiv 0\pmod{p^{\ell+1}}$ holds
  while $f(\xi_{s+1}) \equiv \ldots \equiv f(\xi_r) \equiv 0
  \pmod{p^{\ell+1}}$, then the combinations of the elements
  \begin{equation}\label{eqn:ModSols}
    \ti \xi_i = \left\{ \begin{array}{cl}
    \xi_i - \gamma_s^{-1} \gamma_i \xi_s,&\textrm{ for each }1\leq i\leq s-1\\[0.75ex]
    \xi_i,&\textrm{ for each } s+1\leq i \leq r.
    \end{array}\right.
  \end{equation}
  are $p^{r-1}$ solutions amongst~(\ref{eqn:Sols}) which satisfy
  $f(x_n)\equiv 0\pmod{p^{\ell+1}}$.
\end{lemma}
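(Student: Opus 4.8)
The plan is to eliminate the parameter $t_s$ by means of the (now non-trivial) lifting condition of Lemma~\ref{lem:NonUni} and then to read off the resulting reduced parametrisation. First I would recall that we are in the case $\ti f'(x_n)\equiv 0\pmod p$, so by Lemma~\ref{lem:NonUni} a solution $x=\xi_0+t_1\xi_1+\cdots+t_r\xi_r$ of~(\ref{eqn:Sols}) lifts modulo $p^{\ell+1}$ if and only if~(\ref{eqn:CondLift}) holds, i.e.\ $\gamma_0+\gamma_1 t_1+\cdots+\gamma_r t_r\equiv 0\pmod p$, where $\gamma_i=f(\xi_i)/p^\ell$ for $1\leq i\leq r$ and $\gamma_0=(f(\xi_0)-b)/p^\ell$. (These are genuine integers since each $\xi_i$ with $i\geq1$ solves the associated homogeneous system and $\xi_0$ the inhomogeneous one modulo $p^\ell$.) By hypothesis $f(\xi_j)\equiv 0\pmod{p^{\ell+1}}$ for $s<j\leq r$, so $\gamma_{s+1}\equiv\cdots\equiv\gamma_r\equiv 0\pmod p$ and the condition collapses to $\gamma_0+\gamma_1 t_1+\cdots+\gamma_s t_s\equiv 0\pmod p$; since $f(\xi_s)\not\equiv 0\pmod{p^{\ell+1}}$, the coefficient $\gamma_s$ is a unit modulo $p$, and this condition is equivalent to~(\ref{eqn:ElimTs}).

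Next I would substitute $t_s\equiv-\gamma_s^{-1}(\gamma_0+\gamma_1 t_1+\cdots+\gamma_{s-1}t_{s-1})\pmod p$ back into $x=\xi_0+\sum_{i=1}^{r}t_i\xi_i$ and collect terms. Setting $\ti\xi_0=\xi_0-\gamma_s^{-1}\gamma_0\xi_s$ (the natural $i=0$ instance of~(\ref{eqn:ModSols})) one obtains
\[
  x=\ti\xi_0+\sum_{i=1}^{s-1}t_i\bigl(\xi_i-\gamma_s^{-1}\gamma_i\xi_s\bigr)+\sum_{i=s+1}^{r}t_i\xi_i=\ti\xi_0+\sum_{i\in\{1,\ldots,r\}\setminus\{s\}}t_i\,\ti\xi_i
\]
with the $\ti\xi_i$ exactly as in~(\ref{eqn:ModSols}). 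Conversely, for any choice of $t_1,\ldots,t_{s-1},t_{s+1},\ldots,t_r\in\{0,\ldots,p-1\}$ the value of $t_s$ forced by~(\ref{eqn:ElimTs}) is a well-defined element of $\{0,\ldots,p-1\}$, and the resulting tuple satisfies~(\ref{eqn:CondLift}) by construction; hence each such combination of the $\ti\xi_i$ is a solution amongst~(\ref{eqn:Sols}) and lifts modulo $p^{\ell+1}$.

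It then remains to count. Here I would invoke the uniqueness of the parametrisation~(\ref{eqn:Sols}): writing $\ti\xi_0+\sum_{i\neq s}t_i\,\ti\xi_i$ in the form $\xi_0+\sum_{j=1}^{r}u_j\xi_j$, one finds $u_i=t_i$ for every $i\neq s$, so distinct tuples $(t_i)_{i\neq s}$ give distinct solutions and there are exactly $p^{r-1}$ of them. (The same bookkeeping shows that these are in fact \emph{all} solutions amongst~(\ref{eqn:Sols}) that lift, since exactly one linear constraint has been imposed on the $p^r$ of them.) I expect the only step that is not purely mechanical to be this modular bookkeeping at the interface of the two parametrisations: identifying the correct base point $\ti\xi_0$ in place of $\xi_0$, and keeping all coefficients inside $\{0,\ldots,p-1\}$ when passing back and forth, while relying on the integrality of the $\gamma_i$ observed above.
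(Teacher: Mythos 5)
The first half of your argument (substituting the forced value of $t_s$ from~(\ref{eqn:ElimTs}) into $x=\xi_0+\sum_i t_i\xi_i$ and collecting terms to obtain $\ti\xi_0+\sum_{i\neq s}t_i\ti\xi_i$) matches what the paper dismisses as the easy direction. The gap is in your counting step. You claim that, writing $\ti\xi_0+\sum_{i\neq s}t_i\,\ti\xi_i$ in the canonical form $\xi_0+\sum_{j=1}^r u_j\xi_j$ of~(\ref{eqn:Sols}), ``one finds $u_i=t_i$ for every $i\neq s$.'' This is precisely the non-trivial assertion, and it does not follow from the uniqueness of~(\ref{eqn:Sols}) alone. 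The value of $t_s$ dictated by~(\ref{eqn:ElimTs}) is only determined modulo $p$; replacing the integer $-\gamma_s^{-1}(\gamma_0+\gamma_1t_1+\cdots+\gamma_{s-1}t_{s-1})$ by its representative in $\{0,\ldots,p-1\}$ changes the element by a term $\delta p\,\xi_s$. The point is that $p\xi_s$ need \emph{not} vanish modulo $[\ldots,p^\ell,\ldots]$: the homogeneous solutions $\xi_i$ can have order larger than $p$ in the relevant quotient, and $p\xi_s$ may be a non-trivial combination $\alpha_{s+1}\xi_{s+1}+\cdots+\alpha_r\xi_r$ of the remaining generators. Consequently the canonical coefficients $u_j$ for $j>s$ are in general $t_j$ shifted by $\delta\alpha_j$ modulo $p$, and injectivity of the map $(t_i)_{i\neq s}\mapsto\ti\xi_0+\sum_{i\neq s}t_i\ti\xi_i$ --- hence the count $p^{r-1}$ --- is not immediate. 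You flag this ``modular bookkeeping'' as the delicate point but do not resolve it.

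The paper's proof is devoted almost entirely to this issue: it writes the two competing representations with correction terms $\delta p\xi_s$ and $\varepsilon p\xi_s$ as in~(\ref{eqn:01}), expands $p\xi_s$ uniquely in terms of $\xi_{s+1},\ldots,\xi_r$, and then argues by induction on the weight $w(\xi)=\min\{i\mid p^i\xi\equiv 0\pmod{[\ldots,p^\ell,\ldots]}\}$ that the expansion only involves generators of strictly smaller weight, which forces $\delta=\varepsilon$ and finally $t_i=u_i$ for all $i\neq s$. To repair your proposal you would need to supply an argument of this kind (or an equivalent structural argument about the subgroup generated by $\xi_1,\ldots,\xi_r$); as written, the distinctness of the $p^{r-1}$ combinations is asserted rather than proved.
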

\begin{proof}
  It is easy to see that for any element $\ti\xi_i$ in~(\ref{eqn:ModSols})
  it holds that $f(x_n)\equiv 0\pmod{p^{\ell+1}}$. Thus it remains to prove that
  these elements are independent modulo $[p^{e_1},\ldots,p^{e_k},p^{\ell},
  \ldots, p^{\ell+1},p^{\ell+1},\ldots, p^{\ell+1}]$. For this purpose, 
  assume that we are given $t_i,u_i\in\{0,\ldots,p-1\}$ such that
  \[
    \ti\xi_0 + \sum_{i\neq s} t_i \ti\xi_i \equiv 
    \ti\xi_0 + \sum_{i\neq s} u_i\ti\xi_i \pmod{[\ldots,p^{\ell+1},\ldots]}
  \]
  holds. Then, by construction, we have that 
  \begin{eqnarray*}
    &&\xi_0 + \sum_{i\neq s} t_i \xi_i -\gamma_s^{-1}(\gamma_0 + \gamma_1t_1+\cdots+\gamma_{s-1} t_{s-1})\xi_s\\
    &\equiv& \xi_0 + \sum_{i\neq s} u_i\xi_i - \gamma_s^{-1}(\gamma_0+\gamma_1u_1+\cdots+\gamma_{s-1}u_{s-1})\xi_s\pmod{[\ldots,p^{\ell+1},\ldots]}
  \end{eqnarray*}
  By Equation~(\ref{eqn:ElimTs}), there exist $\delta,\varepsilon\in\Z$ so that 
  \begin{equation}\label{eqn:01}
    \begin{array}{rcl}
    t_s &=& -\gamma_s^{-1}(\gamma_0 + \gamma_1t_1 + \cdots +
    \gamma_{s-1}t_{s-1}) + \delta p  \\[0.7ex]
    u_s &=& -\gamma_s^{-1}(\gamma_0 + \gamma_1 u_1+ \cdots
    +\gamma_{s-1} u_{s-1}) + \varepsilon p.
    \end{array}
  \end{equation}
  Thus we obtain that 
  \[
    \xi_0 + \sum_{i=1}^r t_i \xi_i + \delta p\xi_s 
    \equiv \xi_0 + \sum_{i=1}^r u_i\xi_i + \varepsilon p\xi_s 
    \pmod{[\ldots,p^{\ell+1},\ldots]}.
  \]
  and, in particular,
  \begin{equation}\label{eqn:Cond}
    \xi_0 + \sum_{i=1}^r t_i \xi_i + \delta p\xi_s 
    \equiv \xi_0 + \sum_{i=1}^r u_i\xi_i + \varepsilon p\xi_s 
    \pmod{[\ldots,p^{\ell},\ldots]}.
  \end{equation}
  If $p \xi_s$ vanishes modulo $[\ldots,p^\ell,\ldots]$, the
  claim follows from the independence of $\xi_1,\ldots,\xi_r$
  in~\ref{eqn:Sols}. Otherwise, as $f(p\xi_s) \equiv 0 \pmod{p^{\ell}}$
  holds, the element $p\xi_s$ can be written uniquely as
  $p\xi_s = \alpha_{s+1}\xi_{s+1} +\cdots+ \alpha_r\xi_r$ with
  $\alpha_i\in\{0,\ldots,p-1\}$. Note that the the elements $\xi_i$, with
  $s<i\leq r$, have a weight strictly less than $\xi_s$; i.e. if $w(\xi)
  = \min\{ i \mid p^i\xi \equiv 0 \pmod{[\ldots,p^\ell,\ldots]}\}$
  denotes the \emph{weight} of $\xi$, then, by construction, we have
  that $w(\alpha_i) \leq w(\xi_s)$. Suppose that $w(\alpha_i) = w(\xi_s)$
  holds for some $s<i\leq r$. Then we obtain that
  \[
    \alpha_{s+1}\,p^{a-1}\,\xi_{s+1} + \cdots + \alpha_r\,p^{a-1}\,\xi_r
    = p^a \xi_s \equiv 0 \pmod{[\ldots,p^\ell,\ldots]}
  \]
  which contradicts the independence of $\xi_{s+1},\ldots,\xi_r$ of
  weight $p^{a}$ if at least one $\alpha_i$ does not vanish. Thus, by
  induction on the weight, we obtain that Equation~(\ref{eqn:Cond}) 
  implies that $t_1 = u_1,\ldots,t_s = u_s$, and additionally 
  \[
    t_i + \delta \alpha_i \equiv u_i + \varepsilon\alpha_i \pmod p.
  \]
  By Equation~(\ref{eqn:01}), this yields that $\delta = \varepsilon$
  and therefore, by the choice of the $t_i$'s and $u_i$'s we finally
  obtain $t_i = u_i$, for each $1\leq i<s$ or $s<i\leq r$.
\end{proof}
We summarize the algorithm in~\ref{alg:Hensel}.
\begin{algorithm}[ht]\label{alg:Hensel}
\begin{center}
\begin{minipage}{10cm}
\begin{tabbing}
  {\bf Algorithm} {\scshape HenselLifting} ( $A$, $b$, $p$, $e_1\leq\ldots\leq e_m$ )\\[0.5ex]
  ~~\=Determine the solutions to $Ax \equiv b \pmod p$.\\
  \>{\bf for} $\ell\in\{ 1,\ldots, e_m\}$ {\bf do} \\
  \>\quad\= Echelonize the sub-matrix $\ti A$ modulo $p^{\ell}$.\\[0.2ex]
  \>\> Let $k(\ell) = \min\{ i \mid \ell\leq e_i \}$.\\
  \>\> {\bf for} $n \in \{ m, m-1, \ldots, k(\ell) \}$ {\bf do} \\
  \>\>\quad\= Let $\alpha_{jn}$ be the corner-entry corresponding to $x_n$. \\
  \>\>\> {\bf if} $\alpha_{jn} \not\equiv 0 \pmod p$ {\bf then} \\
  \>\>\>\quad\= Evaluate Equation~(\ref{eqn:UniqLift}) and
                determine $t\in\{0,\ldots,p-1\}$\\
  \>\>\>\> Lift the solution $x_n$ uniquely.\\
  \>\>\> {\bf else}\\
  \>\>\>\> Consider the condition~(\ref{eqn:CondLift}).\\
  \>\>\>\> {\bf if} this cannot be satisfied {\bf then}\\
  \>\>\>\>\quad\= {\bf return fail};\\
  \>\>\>\> {\bf else}\\
  \>\>\>\>\> Restrict to the solutions in Lemma~\ref{lem:NonUni}.\\
  \>\>\>\>\> Lift these latter solutions.\\
  \>\>\>\> {\bf end if};\\
  \>\>\> {\bf end if};\\
  \>\> {\bf end for};\\
  \>{\bf end for};
\end{tabbing}
\end{minipage}
\caption{Solving endomorphic equations with the Hensel lemma}
\end{center}
\end{algorithm}
Note that we only used the fact that the linear system arises from
an endomorphism of an abelian $p$-group, in form of the divisibility
condition in~(\ref{eqn:DivCond}). Clearly, the divisibility condition
is always satisfied for any linear system of equations over a homocyclic
$p$-group. Therefore the algorithm {\scshape HenselLifting} also applies
to the following problems:
\begin{itemize}\addtolength{\itemsep}{-1ex}
\item Let $G$ be an abelian $p$-group and let $g_1,\ldots,g_n\in G$. 
      Decide whether or not $b \in \la g_1,\ldots,g_n\ra$ holds.
\item Let $G$ be an abelian $p$-group and let $\varphi\colon G^m \to G^n$
      be a homomorphism. Then the algorithm {\scshape HenselLifting}
      also solves the linear system of modular equations $Ax=b$, as the
      divisibility condition is also satisfied in this case.
\item Let $G$ be an abelian $p$-group with generators $g_1,\ldots,g_n$. 
      Further let $\varphi\colon G\to G$ be an automorphism given by the
      images $g_1^\varphi,\ldots,g_n^\varphi$.  Then the algorithm
      {\scshape HenselLifting} easily modifies so that it computes the
      inverse $\varphi^{-1}$.
\end{itemize}
The first problem yields that we can modify the algorithm in
Section~\ref{sec:Block} for solving a linear system $x^\varphi = b$
for an arbitrary homomorphism $\varphi\colon G\to H$ of two abelian
$p$-groups $G$ and $H$.

\subsubsection{Solving linear equations over arbitrary finite $p$-groups}
Let $G$ and $H$ be arbitrary abelian $p$-groups and let $\varphi\colon
G\to H$ be a homomorphism. The algorithm of Section~\ref{sec:HenselEndo}
generalizes to an algorithm for solving the linear system $x^\varphi
= b$ for some $b\in H$. This is a slight modification of the algorithm in
Section~\ref{sec:Block}. 

\nocite{GAP4}
\bibliographystyle{abbrv}
\bibliography{solv}

\end{document}